\theoremstyle{plain}
\newtheorem{theorem}{Theorem}[section]
\newtheorem{corollary}[theorem]{Corollary}
\newtheorem{problem}[theorem]{Problem}
\theoremstyle{remark}
\newcommand{\CC}{{\mathbb C}}
\newcommand{\DD}{{\mathbb D}}
\newcommand{\TT}{{\mathbb T}}
\newcommand{\cB}{{\mathcal B}}
\newcommand{\cD}{{\mathcal D}}
\newcommand{\cH}{{\mathcal H}}
\newcommand{\cM}{{\mathcal M}}
\DeclareMathOperator{\hol}{\mathrm Hol}
\DeclareMathOperator{\vmoa}{\mathrm VMOA}
\begin{document}

\title[Gleason--Kahane--\.Zelazko theorems]{Gleason--Kahane--\.Zelazko theorems in function spaces}

\author[Mashreghi]{Javad Mashreghi}
\address{D\'epartement de math\'ematiques et de statistique, Universit\'e Laval, 
Qu\'ebec City (Qu\'ebec),  Canada G1V 0A6}
\email{javad.mashreghi@mat.ulaval.ca}
\thanks{JM supported by an NSERC grant}

\author[Ransford]{Thomas Ransford}
\address{D\'epartement de math\'ematiques et de statistique, Universit\'e Laval, 
Qu\'ebec City (Qu\'ebec),  Canada G1V 0A6}
\email{thomas.ransford@mat.ulaval.ca}
\thanks{TR supported by grants from NSERC and the Canada Research Chairs program}

\date{24 Jan 2018}

\begin{abstract}
The Gleason--Kahane--\.Zelazko theorem states that a linear functional on a Banach algebra
that is non-zero on invertible elements is necessarily a scalar multiple of a character.
Recently this theorem has been extended to certain Banach function spaces that are not algebras.
In this article we present a brief survey of these extensions.
\end{abstract}

\subjclass[2010]{primary 15A86; secondary 30H10, 46H05, 46H40, 47B49}

\keywords{linear functional, character, automatic continuity, Banach algebra, function space, Hardy space, Dirichlet space, reproducing kernel}

\maketitle


\section{Introduction}\label{S:intro}

The following result is known as the Gleason--Kahane--\.Zelazko theorem
(or GKZ-theorem for short).
For commutative algebras it was proved independently 
by Gleason \cite{Gl67} and by  Kahane and \.Zelazko \cite{KZ68}.
Subsequently \.Zelazko \cite{Ze68}  extended it to the  non-commutative case.

\begin{theorem}\label{T:GKZ}
Let $A$ be a complex unital Banach algebra, 
and let \mbox{$\Lambda:A\to\CC$} be a linear functional such that
$\Lambda(1)=1$ and $\Lambda(a)\ne0$ for all invertible elements $a\in A$. 
Then $\Lambda(ab)=\Lambda(a)\Lambda(b)$ for all $a,b\in A$.
\end{theorem}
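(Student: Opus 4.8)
The plan is to follow the classical argument, whose one genuinely analytic ingredient is Hadamard's factorization theorem for entire functions of finite order. First I would establish automatic continuity together with the spectral inclusion $\Lambda(a)\in\sigma(a)$: if $\lambda\in\CC$ lies outside the spectrum $\sigma(a)$, then $a-\lambda 1$ is invertible, so $\Lambda(a)-\lambda=\Lambda(a-\lambda 1)\ne 0$, which forces $\Lambda(a)\in\sigma(a)$ and hence $|\Lambda(a)|\le\|a\|$. Thus $\|\Lambda\|=1$; in particular $\Lambda$ is bounded, so it may be applied term by term to norm-convergent series, which is used repeatedly below.

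The heart of the matter is the implication $\Lambda(a)=0\Rightarrow\Lambda(a^2)=0$. Given $a\in\ker\Lambda$, I would consider the entire function $f(\lambda):=\Lambda(\exp(\lambda a))$. Since $\exp(\lambda a)$ is invertible for every $\lambda\in\CC$, $f$ is zero-free; the estimate $\|\exp(\lambda a)\|\le e^{|\lambda|\,\|a\|}$ shows that $f$ has order at most $1$; and $f(0)=\Lambda(1)=1$. Hadamard's theorem then yields $f(\lambda)=e^{c\lambda}$ for some $c\in\CC$, and expanding $\exp(\lambda a)$ as a power series (legitimate by continuity of $\Lambda$) identifies $c$ with the coefficient of $\lambda$, namely $\Lambda(a)=0$. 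Hence $f\equiv 1$, and comparing Taylor coefficients gives $\Lambda(a^n)=0$ for all $n\ge 1$.

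From here the algebra is routine. For arbitrary $a$, applying the previous step to $b:=a-\Lambda(a)1\in\ker\Lambda$ and unwinding $\Lambda(b^2)=0$ gives $\Lambda(a^2)=\Lambda(a)^2$; linearising (replace $a$ by $a+b$) then gives the Jordan identity $\Lambda(ab+ba)=2\Lambda(a)\Lambda(b)$ for all $a,b$. To remove the commutator I would fix $a,b$ and set $h(\lambda):=\exp(\lambda a)\,b\,\exp(-\lambda a)$, an entire $A$-valued function with $h(0)=b$ and $h'(0)=ab-ba$. Since each $h(\lambda)$ is similar to $b$, we have $\sigma(h(\lambda))=\sigma(b)$, so by the first step $\lambda\mapsto\Lambda(h(\lambda))$ is an entire scalar function taking values in the bounded set $\sigma(b)$, hence constant by Liouville's theorem. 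Differentiating at $\lambda=0$ gives $\Lambda(ab-ba)=0$, and adding this to the Jordan identity yields $\Lambda(ab)=\Lambda(a)\Lambda(b)$, as required.

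I expect the second step to be the main obstacle: it is precisely there that the hypothesis of non-vanishing on invertible elements must be converted into an algebraic identity, and this conversion genuinely uses the complex-analytic machinery, via a zero-free entire function of finite order controlled by the exponential bound. Everything preceding it is soft, and everything following it is elementary algebra together with one application of Liouville's theorem.
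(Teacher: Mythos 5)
Your proof is correct, but it is a genuinely different argument from the one the paper presents. You follow the original Gleason--Kahane--\.Zelazko route: first the spectral inclusion $\Lambda(a)\in\sigma(a)$ (hence $\|\Lambda\|=1$ and automatic continuity), then the zero-free entire function $\lambda\mapsto\Lambda(\exp(\lambda a))$ of order at most $1$, killed by Hadamard's factorization theorem to give $\Lambda(a^n)=0$ for $a\in\ker\Lambda$, and finally a Liouville argument applied to $\lambda\mapsto\Lambda(e^{\lambda a}be^{-\lambda a})$ to obtain $\Lambda(ab)=\Lambda(ba)$, which combined with the Jordan identity $\Lambda(ab+ba)=2\Lambda(a)\Lambda(b)$ finishes the non-commutative case; each step checks out (the termwise application of $\Lambda$ is justified by the continuity you derive first, and similarity does preserve the spectrum). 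The paper instead reproduces the elementary proof of Roitman and Sternfeld: the hypothesis is applied only to $(t1-a)^n$ for $t>\|a\|$, so the roots of the degree-$n$ polynomial $t\mapsto\Lambda((t1-a)^n)$ lie in $\{|t|\le\|a\|\}$, and the Newton-type identity for the sum of squares of the roots gives $|(n\Lambda(a))^2-n(n-1)\Lambda(a^2)|\le n\|a\|^2$; dividing by $n^2$ and letting $n\to\infty$ yields $\Lambda(a^2)=\Lambda(a)^2$ directly, with no continuity, no entire functions and no Hadamard or Liouville theorems, and the non-commutative case is then settled purely algebraically by Sinclair's trick with $c:=bab$. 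The trade-off: your route uses heavier complex-analytic machinery but yields stronger by-products along the way ($|\Lambda(a)|\le\|a\|$, $\Lambda(a)\in\sigma(a)$, and the vanishing of $\Lambda$ on all powers of kernel elements), whereas the paper's route is completely elementary and never even needs to establish continuity of $\Lambda$.
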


Note that continuity of $\Lambda$ is not assumed.
This result is thus an early example of an automatic continuity theorem.

The original proofs of Theorem~\ref{T:GKZ} used results about entire functions. 
In \S\ref{S:GKZproof} we include  a completely elementary proof, due to Roitman and Sternfeld,
which we believe deserves to be more widely known.

The GKZ-theorem has been extended in many different ways; see for example the article of
Jarosz \cite{Ja91}.
Here we present a short survey of some recent extensions of the theorem
to function spaces.
The starting point is a generalization of the GKZ-theorem to 
$A$-modules, whose details are described in \S\ref{S:modules}. 
The module theorem leads to versions of the GKZ-theorem on  Hardy spaces and 
on weighted Dirichlet spaces. These are treated in 
\S\S\ref{S:Hardy}--\ref{S:Dirichlet}. 
In \S\ref{S:functspaces} we derive a GKZ-theorem in more general functions spaces,
but at the cost of assuming continuity of the linear functional.
Finally, in \S\ref{S:openproblems}, we conclude  with some open problems.


\section{An elementary proof of the GKZ-theorem}\label{S:GKZproof}

The following proof is due to Roitman and Sternfeld \cite{RS81}.

\begin{proof}[Proof of Theorem~\ref{T:GKZ}]
Let $a\in A$. Given $n\ge1$ and $t\in\CC$, we have
\[
\Lambda((t1-a)^n))=t^n-n\Lambda(a)t^{n-1}+\frac{1}{2}n(n-1)\Lambda(a^2)t^{n-2}-\dots +(-1)^n\Lambda(a^n).
\]
This is a polynomial in $t$ of degree $n$.
If $t>\|a\|$, then $(t1-a)^n$ is invertible,
and consequently $\Lambda((t1-a)^n)\ne0$. 
Thus the roots $\lambda_1,\dots,\lambda_n$ of the polynomial all satisfy $|\lambda_j|\le \|a\|$.
Now, by elementary algebra,
\[
(n\Lambda(a))^2-n(n-1)\Lambda(a^2)
=\bigl(\sum_{j=1}^n\lambda_j\bigr)^2-\sum_{\substack{j,k=1\\j\ne k}}^n\lambda_j\lambda_k
=\sum_{j=1}^n\lambda_j^2,
\]
whence
\[
|(n\Lambda(a))^2-n(n-1)\Lambda(a^2)|\le n\|a\|^2.
\]
Dividing through by $n^2$ and letting $n\to\infty$, we obtain
\begin{equation}\label{E:a^2}
\Lambda(a^2)=\Lambda(a)^2.
\end{equation}
Replacing $a$  by $a+b$ in this identity, we deduce that
\begin{equation}\label{E:ab+ba}
\Lambda(ab+ba)=2\Lambda(a)\Lambda(b).
\end{equation}
In particular, if $A$ is commutative, then $\Lambda(ab)=\Lambda(a)\Lambda(b)$,
proving the theorem in this case.

When $A$ is non-commutative, we use a variant of the argument of \.Zelazko 
due to Allan Sinclair.
Suppose, if possible, that there exist $a,b\in A$ such that $\Lambda(ab)\ne\Lambda(a)\Lambda(b)$.
Replacing $a$ by $\lambda a+\mu 1$, where $\lambda,\mu$ are suitably chosen scalars, we may suppose
that $\Lambda(a)=0$ and $\Lambda(ab)=1$. By \eqref{E:ab+ba}, it then follows that $\Lambda(ba)=-1$.
Set $c:=bab$. Then, on the one hand, we have 
\[
\Lambda(a)\Lambda(c)=0\Lambda(c)=0,
\]
whilst on the other hand, using \eqref{E:ab+ba} and then \eqref{E:a^2}, we have
\[
2\Lambda(a)\Lambda(c)=\Lambda(ac+ca)=\Lambda((ab)^2)+\Lambda((ba)^2)=\Lambda(ab)^2+\Lambda(ba)^2=2.
\]
This contradiction establishes the result.
\end{proof}


\section{A GKZ-theorem for modules}\label{S:modules}

The following result, which is a generalization of classical GKZ-theorem to modules,
was obtained in \cite{MR15}.

\begin{theorem}\label{T:module}
Let $A$ be a  complex unital Banach algebra, let $M$ be a left $A$-module,
and let $S$ be a non-empty subset of $M$ satisfying the following conditions:
\begin{enumerate}[\upshape(S1)]
\item $S$ generates $M$ as an $A$-module;
\item if $a\in A$ is invertible and $s\in S$, then $a s\in S$;
\item if $s_1,s_2\in S$, then there exist $a_1,a_2\in A$ such that 
$a_1 S\cup a_2S\subset S$ and $a_1 s_1=a_2 s_2$.
\end{enumerate}
Let $\Lambda:M\to\CC$ be a linear functional such that 
$\Lambda(s)\ne0$ for all $s\in S$. 
Then there exists a unique character $\chi$ on $A$ such that
\begin{equation}\label{E:lambdachi}
\Lambda(a m)=\chi(a)\Lambda(m) \qquad(a\in A,~m\in M).
\end{equation}
\end{theorem}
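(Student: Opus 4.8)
The plan is to reduce Theorem~\ref{T:module} to the classical GKZ-theorem (Theorem~\ref{T:GKZ}) by manufacturing, from the data $(\Lambda, S)$, a linear functional on $A$ that is non-zero on invertibles. Fix any $s_0\in S$ and define $\chi:A\to\CC$ by $\chi(a):=\Lambda(a s_0)/\Lambda(s_0)$; this is well defined since $\Lambda(s_0)\ne0$, it is linear, and $\chi(1)=1$. If $a\in A$ is invertible, then by (S2) we have $a s_0\in S$, hence $\Lambda(a s_0)\ne0$, so $\chi(a)\ne0$. Theorem~\ref{T:GKZ} then gives at once that $\chi$ is a character on $A$.

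It remains to verify the intertwining relation \eqref{E:lambdachi}, and this is where conditions (S1) and (S3) enter. First I would establish it for $m\in S$: given $s\in S$, apply (S3) to the pair $s_0,s$ to get $a_1,a_2\in A$ with $a_1S\cup a_2S\subset S$ and $a_1 s_0=a_2 s$. For any $b\in A$, write $\Lambda(b(a_2 s)) = \Lambda((b a_1)s_0) = \chi(b a_1)\Lambda(s_0) = \chi(b)\chi(a_1)\Lambda(s_0) = \chi(b)\Lambda(a_1 s_0) = \chi(b)\Lambda(a_2 s)$; here I used that $\chi$ is multiplicative. Taking $b=1$ shows $\Lambda(a_2 s)=\chi(a_2)\Lambda(s_0)$, and since $a_2 s\in S$ this quantity is non-zero, so $\chi(a_2)\ne0$. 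Now for arbitrary $a\in A$, applying the displayed identity with $b=a a_2^{-1}$... — but $a_2$ need not be invertible, so instead I argue as follows. From $\Lambda(b(a_2 s))=\chi(b)\Lambda(a_2 s)$ valid for all $b\in A$, and from the fact that $a_2 s\in S$ so that the map $b\mapsto a_2 s$ realizes $s$-type behaviour, I replace $s$ by $a_2 s$ in the role above; more cleanly, the relation $\Lambda(b\cdot(a_2 s))=\chi(b)\Lambda(a_2 s)$ for all $b$ already says \eqref{E:lambdachi} holds with $m=a_2 s$. To pass to $m=s$ itself, observe $a_1 s_0 = a_2 s$ and compute $\Lambda(a(a_2 s)) = \Lambda((a a_1)s_0)=\chi(a a_1)\Lambda(s_0)=\chi(a)\chi(a_1)\Lambda(s_0)=\chi(a)\Lambda(a_1 s_0)=\chi(a)\Lambda(a_2 s)$, and separately $\Lambda(a_2 s)=\chi(a_2)\Lambda(s)$ will follow once we know \eqref{E:lambdachi} for $s$ — so this needs care. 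The clean route: prove directly that for every $s\in S$ and every $a\in A$, $\Lambda(as)=\chi(a)\Lambda(s)$, by picking the pair $(a_1,a_2)$ from (S3) for $(s_0,s)$ and noting $\chi(a_1)\Lambda(s_0)=\Lambda(a_1 s_0)=\Lambda(a_2 s)$ and, symmetrically from $b(a_2 s)\in S$-computations, $\chi(a_2)\Lambda(s)=\Lambda(a_2 s)$ — the second equality is exactly \eqref{E:lambdachi} for the element $s$ tested against the specific algebra element $a_2$, which one extracts by the same $b$-argument applied with the roles of $s_0$ and $s$ swapped and then specialising. Finally, by (S1) every $m\in M$ is a finite sum $\sum_j c_j s_j$ with $c_j\in A$, $s_j\in S$, so $\Lambda(am)=\sum_j\Lambda(a c_j s_j)=\sum_j\chi(a c_j)\Lambda(s_j)=\chi(a)\sum_j\chi(c_j)\Lambda(s_j)=\chi(a)\sum_j\Lambda(c_j s_j)=\chi(a)\Lambda(m)$, using the multiplicativity of $\chi$ and the case $m\in S$ applied to $c_j s_j$ — which requires knowing \eqref{E:lambdachi} for products $c s$ with $s\in S$, again a consequence of the $S$-case since we only ever test $\Lambda$ on $A\cdot s$.

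The main obstacle is precisely the bookkeeping in the middle step: condition (S3) only guarantees one can \emph{compare} two elements of $S$ after multiplying by algebra elements that are \emph{not assumed invertible}, so one cannot simply cancel. The trick is to exploit that the multiplicativity of $\chi$ lets one commute $\chi$ past products, and that (S3) ties any $s$ to the base point $s_0$ through the common value $a_1 s_0 = a_2 s$; evaluating $\Lambda$ of $b$ times this common value in two ways, for all $b\in A$, yields $\chi(b a_1)\Lambda(s_0)=\chi(b a_2)\Lambda(s)\cdot(\text{ratio})$ and forces the identity $\Lambda(a_2 s)=\chi(a_2)\Lambda(s)$ without division. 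Uniqueness of $\chi$ is immediate: any $\chi'$ satisfying \eqref{E:lambdachi} must satisfy $\chi'(a)\Lambda(s_0)=\Lambda(a s_0)=\chi(a)\Lambda(s_0)$, and $\Lambda(s_0)\ne0$ gives $\chi'=\chi$.
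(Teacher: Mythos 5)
Your overall strategy --- reduce to Theorem~\ref{T:GKZ} via $a\mapsto\Lambda(as)/\Lambda(s)$, use (S3) to compare points of $S$, and use (S1) plus multiplicativity to extend to all of $M$ --- is the right one, and your first step (that $\chi=\chi_{s_0}$ is a character), your final extension step, and your uniqueness argument are all correct. But the middle step, passing from the base point $s_0$ to an arbitrary $s\in S$, is not actually proved; as written it is circular. From $a_1s_0=a_2s$ and the multiplicativity of $\chi$ you correctly get $\Lambda(b(a_2s))=\chi(b)\Lambda(a_2s)$ for all $b\in A$, i.e.\ \eqref{E:lambdachi} for the single element $m=a_2s$; since $a_2$ need not be invertible you cannot cancel it to reach $m=s$. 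Your proposed repair is to assert $\Lambda(a_2s)=\chi(a_2)\Lambda(s)$ ``by the same $b$-argument with the roles of $s_0$ and $s$ swapped'', but that argument is not available to you: run with the roles swapped it reads $\Lambda(b(a_1s_0))=\Lambda((ba_2)s)=\chi_s(ba_2)\Lambda(s)=\chi_s(b)\chi_s(a_2)\Lambda(s)$, which requires the multiplicativity of the functional $\chi_s(b):=\Lambda(bs)/\Lambda(s)$ attached to the point $s$ --- exactly the kind of statement you have established only for $s_0$. At that stage everything you know concerns the values of $\Lambda$ on $Aa_2s$ together with $\Lambda(s)\ne0$, and that information alone does not determine $\Lambda(as)$ for general $a$.

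The missing idea is to run your first step at every point of $S$, not just at $s_0$: for each $s\in S$ define $\chi_s(a):=\Lambda(as)/\Lambda(s)$; by (S2) and Theorem~\ref{T:GKZ} each $\chi_s$ is a character (so \eqref{E:lambdachi} with $m=s$ holds by definition, against $\chi_s$), and the real content of (S3) is that all these characters coincide. Indeed, with $a_1s_0=a_2s$ and $a_1S\cup a_2S\subset S$, you get $\chi_{s_0}(ba_1)\Lambda(s_0)=\chi_s(ba_2)\Lambda(s)$ for all $b\in A$, and for $b=1$ both sides equal $\Lambda(a_1s_0)=\Lambda(a_2s)\ne0$ because $a_1s_0,a_2s\in S$; dividing the general identity by the $b=1$ identity and using multiplicativity of both characters gives $\chi_{s_0}(b)=\chi_s(b)$. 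This is precisely how the paper argues, and with this modification the remainder of your proof (the (S1) step and uniqueness) goes through verbatim.
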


Notice that continuity of $\Lambda$ is not assumed.
Indeed, the $A$-module $M$ is not assumed to carry any topological structure.

Theorem~\ref{T:module} contains the classical GKZ-theorem as a special case,
as can be seen by taking $M=A$ and $S$ to be the set of invertible elements of $A$. 
Note, however, that the GKZ-theorem is used in its proof.

\begin{proof}
Uniqueness of $\chi$ is clear. 
Indeed, fixing any $s\in S$, by \eqref{E:lambdachi} we must have
\[
\chi(a)=\Lambda(a s)/\Lambda(s) \qquad(a\in A).
\]

To prove existence, we begin by deriving inspiration from this last equation. 
Given $s\in S$,  define $\chi_s:A\to\CC$ by
\[
\chi_s(a):=\Lambda(a s)/\Lambda(s) \qquad(a\in A).
\]
Clearly $\chi_s$ is a linear functional on $A$ satisfying $\chi_s(1)=1$, 
and from property~(S2) we have $\chi_s(a)\ne0$ for all invertible $a\in A$.
By Theorem~\ref{T:GKZ}, it follows that $\chi_s$ is a character on $A$.

Next, given $s_1,s_2\in S$,  property~(S3)
yields the existence of elements $a_1,a_2\in A$ such that 
$a_1S\cup a_2S\subset S$ and $a_1 s_1=a_2 s_2$. 
Then $\Lambda(a_1 s_1)=\Lambda(a_2 s_2)$, whence
\begin{equation}\label{E:chieqn1}
\chi_{s_1}(a_1)\Lambda(s_1)=\chi_{s_2}(a_2)\Lambda(s_2).
\end{equation}
Equally, for each $a\in A$, we have $aa_1 s_1=aa_2 s_2$, whence
\begin{equation}\label{E:chieqn2}
\chi_{s_1}(aa_1)\Lambda(s_1)=\chi_{s_2}(aa_2)\Lambda(s_2).
\end{equation}
Now both sides of \eqref{E:chieqn1} are non-zero, because $a_j s_j\in S$.
Thus we may divide \eqref{E:chieqn2} by \eqref{E:chieqn1} to obtain
\[
\chi_{s_1}(a)=\chi_{s_2}(a).
\]
In other words, $\chi_s$ is independent of $s$. 
Let us call it simply $\chi$. 
Note that we then have
\[
\Lambda(a s)=\chi(a)\Lambda(s) \qquad(a\in A,~ s\in S).
\]

Finally, let $a\in A$ and $m\in M$. 
By property~(S1), there exist $a_1,\dots,a_n\in A$ and $s_1,\dots,s_n\in S$ 
such that $m=\sum_1^na_j s_j$. Then we have
\begin{align*}
\Lambda(a m)
&=\sum_1^n\Lambda(aa_j s_j)
=\sum_1^n \chi(aa_j)\Lambda(s_j)\\
&=\chi(a)\sum_1^n \chi(a_j)\Lambda(s_j)
=\chi(a)\Lambda(m),
\end{align*}
which gives \eqref{E:lambdachi}.
\end{proof}

In practice, condition (S2) is usually easy to check, whereas (S1) and (S3) often require more effort.
If either (S1) or (S3) fails to hold, then it is still possible to apply Theorem~\ref{T:module} by proceeding as follows.
Let $A,M,S$ be as above and assume that (S2) holds, but not necessarily (S1) or (S3). 
Define a relation $\sim$ on $S$ by
\[
s_1\sim s_2 \iff \exists\, a_1,a_2\in A \text{~such that~} a_1S\cup a_2S\subset S \text{~and~}a_1s_1=a_2s_2.
\]
It is easy to check that $\sim$ is an equivalence relation on $S$. 
Let $S_0$ be an equivalence class of $S$ under this relation and let $M_0$
be the $A$-submodule generated by $S_0$. Then the triple $A,M_0,S_0$ satisfies all three conditions
(S1), (S2) and (S3), so Theorem~\ref{T:module} is applicable, and we deduce that there exists a character $\chi_0$ on $A$ (depending on the choice of $S_0$) such that
\[
\Lambda(am)=\chi_0(a)\Lambda(m) \quad(a\in A,~m\in M_0).
\]


\section{GKZ-theorems in Hardy  spaces}\label{S:Hardy}

Let $\DD$ denote the open unit disk and $\TT$ denote the unit circle. 
We write $\hol(\DD)$ for the space of holomorphic functions on $\DD$.
The Hardy spaces on $\DD$ are defined as follows:
\begin{align*}
H^p
&:=\Bigl\{f\in\hol(\DD):\sup_{r<1}\int_0^{2\pi}|f(re^{i\theta})|^p\,d\theta<\infty\Bigr\}
\quad(0<p<\infty),\\
H^\infty
&:=\Bigl\{f\in\hol(\DD):\sup_{z\in\DD}|f(z)|<\infty\Bigr\}.
\end{align*}
We say that $h\in \hol(\DD)$ is \emph{inner} if 
\[
|h(z)|\le 1 ~(z\in\DD)
\quad\text{and}\quad
\lim_{r\to1^-}|h(re^{i\theta})|=1 \textrm{~a.e.\ on $\TT$}.
\]
Also $g\in\hol(\DD)$ is \emph{outer} if there exists
$G:\TT\to[0,\infty)$ with $\log G\in L^1(\TT)$ such that
\[
g(z)=\exp\Bigl(\int_0^{2\pi}\frac{e^{i\theta}+z}{e^{i\theta}-z}
\log G(e^{i\theta})\,\frac{d\theta}{2\pi}\Bigr)\
\quad(z\in\DD).
\]
In this case, $g\in H^p$ if and only if $G\in L^p(\TT)$. 
It is a fundamental result that every $f\in H^p$ can be factorized in an essentially unique way
as $f=hg$, where $h$ is inner and $g\in H^p$ is outer.
For this and further background on Hardy spaces, we refer to Duren's book \cite{Du70}.

The following theorem, first obtained in \cite{MR15}, is an analogue of the GKZ-theorem for Hardy spaces.

\begin{theorem}\label{T:Hardy}
Let $0< p\le \infty$ and let $\Lambda:H^p\to\CC$ be a linear functional  
such that $\Lambda(1)=1$ and $\Lambda(g)\ne0$ for all outer functions $g\in H^p$.
Then there exists $w\in \DD$ such that
\[
\Lambda(f)=f(w) \qquad(f\in H^p).
\]
\end{theorem}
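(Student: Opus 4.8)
The plan is to apply the module theorem (Theorem~\ref{T:module}) with $A = H^\infty$, $M = H^p$, and $S$ the set of outer functions in $H^p$. Checking the hypotheses: property~(S2) holds because if $a \in H^\infty$ is invertible (so $a$ and $1/a$ are both bounded, hence $a$ is outer and bounded away from $0$) and $g \in S$ is outer, then $ag$ is again outer and lies in $H^p$, since the outer part of a product is the product of the outer parts and $a$ contributes no inner factor. Property~(S1) holds because every $f \in H^p$ factors as $f = hg$ with $h$ inner (hence $h \in H^\infty$) and $g \in H^p$ outer; thus $f = h \cdot g$ with $h \in A$ and $g \in S$, so $S$ generates $M$. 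For property~(S3), given outer $g_1, g_2 \in H^p$, I would take $a_1 = g_2 / (1 + |g_2|\text{-type normalization})$—more precisely, one wants $a_1, a_2 \in H^\infty$ outer with $a_1 g_1 = a_2 g_2$; choosing $a_1$ to be an outer function with $|a_1| \asymp \min(1, 1/|g_1|)$ and similarly for $a_2$, arranged so that $a_1 g_1 = a_2 g_2$ is a common bounded outer "lower envelope" of $g_1$ and $g_2$, does the job, and multiplication by such bounded outer functions keeps outer functions outer and in $H^p$, so $a_1 S \cup a_2 S \subset S$.

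Granting the hypotheses, Theorem~\ref{T:module} produces a character $\chi$ on $H^\infty$ with $\Lambda(af) = \chi(a)\Lambda(f)$ for all $a \in H^\infty$, $f \in H^p$. Since $\Lambda(1) = 1$, taking $f = 1$ gives $\Lambda(a) = \chi(a)$ for $a \in H^\infty$, so $\Lambda$ restricted to $H^\infty$ is a character. The next step is to identify this character: the characters of $H^\infty$ that are weak-$*$ continuous are exactly the point evaluations $f \mapsto f(w)$ for $w \in \DD$, but not every character of $H^\infty$ is of this form (the maximal ideal space is vast). So I must rule out the exotic characters. The key constraint is that $\chi$ must be nonzero on every outer function in $H^\infty$ (by (S2)/the hypothesis applied with $f=1$): in particular $\chi(z - w) \neq 0$ for every $w \in \TT$, since $z - w$ is outer for $|w| = 1$. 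Setting $w_0 := \chi(z)$, the value $\chi(z - \lambda) = w_0 - \lambda$ must be nonzero for all $\lambda \in \TT$, forcing $|w_0| \neq 1$; and $|\chi(z)| \le \|z\|_\infty = 1$ forces $|w_0| < 1$, so $w_0 \in \DD$. Then $\chi$ agrees with evaluation at $w_0$ on polynomials, hence on $H^\infty$ by density in the relevant topology together with the fact that $(z - w_0)^{-1} \in H^\infty$ pins down $\chi$ on rational functions and a normal-families/approximation argument extends this.

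Finally I would extend from $H^\infty$ to all of $H^p$. For arbitrary $f \in H^p$, write $f = hg$ with $h$ inner, $g$ outer; then $h, g^{-1}$ need care since $g^{-1}$ need not lie in $H^\infty$, but one can instead argue as follows: for any $a \in H^\infty$, $\Lambda(af) = \chi(a)\Lambda(f) = a(w_0)\Lambda(f)$. Choosing $a$ to be a bounded outer function with $af \in H^\infty$ (e.g.\ $a = 1/(1 + \text{outer majorant of } f)$, concretely the outer function with modulus $(1+|f|)^{-1}$ on $\TT$), we get $a(w_0)\Lambda(f) = \Lambda(af) = (af)(w_0) = a(w_0)f(w_0)$, and since $a(w_0) \neq 0$ (as $a$ is outer, so $\chi(a) = a(w_0) \neq 0$), we conclude $\Lambda(f) = f(w_0)$. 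Setting $w := w_0$ completes the proof.

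I expect the main obstacle to be verifying property~(S3): one must exhibit the bounded outer functions $a_1, a_2$ with $a_1 g_1 = a_2 g_2$ and with $a_1 S \cup a_2 S \subset S$ simultaneously, which requires the right explicit choice (a common bounded outer "divisor" built from $\min(|g_1|, |g_2|, 1)$ on the circle) together with the stability of the class of outer $H^p$-functions under multiplication by bounded outer functions. The identification of the character as a point evaluation in $\DD$, while needing the observation that $z - \lambda$ is outer for $|\lambda| = 1$, is then comparatively routine.
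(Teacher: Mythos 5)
Your application of Theorem~\ref{T:module} is the same as the paper's: $A=H^\infty$, $M=H^p$, $S=$ the outer functions in $H^p$, and your verifications of (S1)--(S3) are sound. In particular, your construction for (S3) (divide $g_1$ and $g_2$ into the outer function with boundary modulus $\min(|g_1|,|g_2|,1)$, producing bounded outer $a_1,a_2$ with $a_1g_1=a_2g_2$) is exactly how one proves the fact the paper cites from Duren, that every outer function is a quotient of two bounded outer functions. Your localization of $w_0:=\chi(u)$ in $\DD$ (where $u(z):=z$), using outerness of $u-\lambda$ for $|\lambda|=1$ together with $\|\chi\|\le 1$, is a harmless variant of the paper's argument, which simply uses outerness of $u-\lambda$ for all $|\lambda|\ge1$.

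The genuine gap is in your endgame, where you assert that $\chi$ coincides with evaluation at $w_0$ on all of $H^\infty$. Your justification --- agreement on polynomials plus ``density in the relevant topology,'' the claim that $(z-w_0)^{-1}\in H^\infty$, and a normal-families extension --- does not hold up: polynomials are not norm-dense in $H^\infty$, $\chi$ is only norm-continuous (not weak-$*$ or locally-uniformly continuous), and $(z-w_0)^{-1}$ is not even holomorphic on $\DD$ since $w_0\in\DD$. The fact you want is true (the fiber of the maximal ideal space of $H^\infty$ over a point of $\DD$ is a singleton), but its proof is precisely the difference-quotient factorization: for $h\in H^\infty$, write $h-h(w_0)=(u-w_0)\,k$ with $k=(h-h(w_0))/(u-w_0)\in H^\infty$, and apply $\chi$. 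The paper runs this same trick one level up: given $f\in H^p$, set $k:=(f-f(w))/(u-w)\in H^p$, so $f=f(w)1+(u-w)k$ and $\Lambda(f)=f(w)\Lambda(1)+\chi(u-w)\Lambda(k)=f(w)$. That one line both repairs your identification step and makes your subsequent detour (choosing a bounded outer $a$ with boundary modulus $(1+|f|)^{-1}$ so that $af\in H^\infty$, which also quietly invokes the Smirnov maximum principle) unnecessary. With that substitution your argument is complete and is essentially the paper's proof.
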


Note that, once again,  continuity of $\Lambda$  is not assumed.

\begin{proof}
We apply Theorem~\ref{T:module} with $M:=H^p$ and $A:=H^\infty$, 
taking  $S$ to be the set of outer functions in $H^p$.

Property~(S1) holds by the inner-outer factorization theorem for $H^p$.

Property~(S2) holds because every invertible function in $h\in H^\infty$ is outer.
Indeed, multiplying together  the inner-outer factorizations of $h$ and $1/h$, 
we obtain a factorization of $1$, and by uniqueness 
it follows  that the inner factors of $h$ and $1/h$  must both be  unimodular constants.

Property~(S3) holds because every outer function can be represented 
as the quotient of two bounded outer functions \cite[Proof of Theorem~2.1]{Du70}. 

Thus, by Theorem~\ref{T:module}, 
there exists a character $\chi$ on $H^\infty$ such that
\begin{equation}\label{E:Hplinfun}
\Lambda(hf)=\chi(h)\Lambda(f) \qquad(f\in H^p,~h\in H^\infty).
\end{equation}

Let $w:=\chi(u)$ 
(where $u$ denotes the function $u(z):=z$).
For all $\lambda\in\CC\setminus\DD$, 
the function $(u-\lambda 1)$ is outer, 
so we have  $\Lambda(u-\lambda 1)\ne0$,
whence $\chi(u-\lambda 1)\ne0$ and  $w\ne\lambda$. 
In other words, $w\in\DD$.

To finish the proof, 
we show that $\Lambda(f)=f(w)$ for all $f\in H^p$.
Given $f\in H^p$,  let us define $k(z):=(f(z)-f(w))/(z-w)$. 
Then $k\in H^p$ and  $f=f(w)1+(u-w1)k$. 
Applying $\Lambda$ to both sides of this 
last identity and using \eqref{E:Hplinfun},
we obtain 
\[
\Lambda(f)=f(w)\Lambda(1)+\chi(u-w1)\Lambda(k)=f(w)1+0\Lambda(k)=f(w),
\]
as desired.
\end{proof}

As an application of Theorem~\ref{T:Hardy}, 
we obtain the following very simple characterization of weighted composition operators
on Hardy spaces.

\begin{corollary}\label{C:Hardy}
Let $0< p\le\infty$ and let $T:H^p\to\hol(\DD)$ be a linear map 
such that $(Tg)(z)\ne0$ for all outer functions $g\in H^p$ and all $z\in\DD$.
Then there exist holomorphic functions 
$\phi:\DD\to\DD$ and $\psi:\DD\to\CC\setminus\{0\}$ such that 
\[
Tf=\psi.(f\circ\phi)\qquad(f\in H^p).
\]
\end{corollary}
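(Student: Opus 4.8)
The plan is to apply Theorem~\ref{T:Hardy} pointwise in $z$. Fix $z\in\DD$ and define $\Lambda_z:H^p\to\CC$ by $\Lambda_z(f):=(Tf)(z)$. This is clearly a linear functional, and by hypothesis $\Lambda_z(g)=(Tg)(z)\ne0$ for every outer function $g\in H^p$; in particular $\Lambda_z(1)\ne0$, since the constant function $1$ is outer. Normalizing, set $\widetilde\Lambda_z:=\Lambda_z/\Lambda_z(1)$, so that $\widetilde\Lambda_z(1)=1$ and $\widetilde\Lambda_z$ is still non-zero on outer functions. Theorem~\ref{T:Hardy} then produces a point $w=w(z)\in\DD$ with $\widetilde\Lambda_z(f)=f(w(z))$ for all $f\in H^p$. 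Writing $\psi(z):=\Lambda_z(1)=(T1)(z)$ and $\phi(z):=w(z)$, we obtain $(Tf)(z)=\psi(z)\,f(\phi(z))$ for all $f\in H^p$ and all $z\in\DD$, which is exactly the asserted identity $Tf=\psi\cdot(f\circ\phi)$. Note $\psi(z)=(T1)(z)\ne0$ because $1$ is outer, so $\psi:\DD\to\CC\setminus\{0\}$, and $\phi$ maps $\DD$ into $\DD$ by construction.

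The remaining issue, and the one requiring a little care, is to verify that $\psi$ and $\phi$ are holomorphic on $\DD$. For $\psi$ this is immediate: $\psi=T1\in\hol(\DD)$ since $T$ maps into $\hol(\DD)$. For $\phi$ I would argue as follows. Apply the identity $(Tf)(z)=\psi(z)f(\phi(z))$ with the specific choice $f=u$, the coordinate function $u(z)=z$, which lies in $H^p$. This gives $\phi(z)=(Tu)(z)/\psi(z)=(Tu)(z)/(T1)(z)$ for every $z\in\DD$. Both $Tu$ and $T1$ are holomorphic on $\DD$, and the denominator $T1=\psi$ never vanishes, so $\phi$ is a quotient of holomorphic functions with non-vanishing denominator, hence holomorphic on $\DD$. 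Combined with $\phi(\DD)\subset\DD$ from the previous paragraph, this completes the verification.

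I do not anticipate a genuine obstacle here — the corollary is essentially a pointwise packaging of Theorem~\ref{T:Hardy}. The one point worth stating explicitly is that the representation $\phi=(Tu)/(T1)$ is forced, so $\phi$ inherits holomorphy automatically rather than needing a separate continuity argument; this is the step that makes the whole thing clean. One should also remark that the hypothesis $(Tg)(z)\ne0$ for \emph{all} $z\in\DD$ is what allows the construction to be carried out simultaneously at every point and then assembled into global functions $\phi,\psi$; without the uniformity in $z$ one would only get a pointwise-defined $w(z)$ with no regularity.
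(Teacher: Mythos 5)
Your proof is correct and is essentially the same argument as the paper's: fix $z$, apply Theorem~\ref{T:Hardy} to the normalized functional $f\mapsto (Tf)(z)/(T1)(z)$, and recover holomorphy of $\phi$ from the forced identity $\phi=(Tu)/(T1)$ with $T1$ nowhere zero. The only difference is cosmetic — the paper defines $\phi:=(Tu)/\psi$ up front and then identifies it with the point $w$ produced by the theorem, whereas you proceed in the reverse order.
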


\begin{proof}
Set $\psi:=T1$. 
This is a  holomorphic function on $\DD$, 
and is nowhere zero because $1$ is outer.
Define $\phi:=(Tu)/\psi$, where $u(z):=z$. 
This too  is a holomorphic function  on $\DD$. 

Fix $z\in\DD$. The map $f\mapsto (Tf)(z)/\psi(z)$ 
is a linear functional on $H^p$ that sends $1$ to $1$ and is non-zero on outer functions. 
By Theorem~\ref{T:Hardy}, 
there exists $w\in\DD$ 
such that $(Tf)(z)/\psi(z)=f(w)$ for all $f\in H^p$. 
Taking $f:=u$, we see that $w=\phi(z)$. 
Thus  $\phi(z)\in\DD$ and $(Tf)(z)=\psi(z)f(\phi(z))$ for all $f\in H^p$.
\end{proof}


\section{Reproducing kernel Hilbert spaces}\label{S:RKHS}

The results in the previous section depend heavily on the inner-outer factorization
in Hardy spaces. If we want to extend them to other function spaces using the same basic method,
then we need to establish analogous factorization theorems for these spaces. In general this is difficult. 
But there is one family of spaces where, thanks to some recent developments, it is possible.
The family in question is a certain  class of reproducing kernel Hilbert spaces. 
In this section we describe the background
leading to the factorization theorem, and in the next section we apply it to deduce a GKZ-theorem
for Dirichlet spaces.

Let $\Omega$ be a set. A \emph{reproducing kernel Hilbert space} (RKHS) on $\Omega$ is
a Hilbert space $\cH$ of complex-valued functions on $\Omega$ such that, for each $t\in\Omega$,
the  evaluation functional $f\mapsto f(t):\cH\to\CC$ is continuous. 

Let $\cH$ be a RKHS on $\Omega$.
By the Riesz representation theorem, given $t\in\Omega$, there exists a unique $k_t\in\cH$ such that 
\[
f(t)=\langle f,k_t\rangle \quad(f\in\cH).
\]
The function $k_t$ is called the \emph{reproducing kernel for the point $t$}.
The function $K:\Omega\times\Omega\to\CC$ defined by $K(s,t):=k_t(s)=\langle k_t,k_s\rangle_\cH$ is called the \emph{reproducing kernel for $\cH$}.

A \emph{multiplier} for $\cH$ is a function $h:\Omega\to\CC$
such that $hf\in\cH$ for all $f\in\cH$. 
By the closed graph theorem, $f\mapsto hf$ is then a bounded operator on $\cH$.
The set of multipliers is denoted $\cM(\cH)$.
It is a Banach algebra with respect to the multiplier norm, defined by
\[
\|h\|_{\cM(\cH)}:=\sup\{\|hf\|_\cH:\|f\|_\cH\le1\}.
\]

A function $F:\Omega\times\Omega\to\CC$ is called \emph{positive semidefinite} if, 
for each choice of points $t_1,\dots,t_n\in \Omega$,
the $n\times n$ matrix $(F(t_i,t_j))$ is positive semidefinite. 
A reproducing kernel $K(s,t)$ is always positive semidefinite. 
An RKHS is said to have the  \emph{complete Pick property} if its kernel satisfies
$K(s,t)>0$ for all $s,t\in\Omega$
and if there exists a function $u:\Omega\to\CC\setminus\{0\}$
such that the map
\[
(s,t)\mapsto\Bigl(1-\frac{u(s)\overline{u(t)}}{K(s,t)}\Bigr)
\]
is positive semidefinite. 
(The terminology arises from a connection with the Pick interpolation problem,
which we shall not discuss here.)

For general background on RKHS's we refer to the book \cite{PR16},
and for more information on the complete Pick property we recommend \cite{AM02}.

Now we can state the promised factorization theorem.
It is due to to Aleman, Hartz, McCarthy and Richter \cite[Theorem 1]{AHMR17}.

\begin{theorem}\label{T:AHMR}
Let $\cH$ be an RKHS on $\Omega$.
Assume that $\cH$ has the complete Pick property. 
Then each $f\in\cH$ can be factorized as $f=h/k$, where 
$h,k\in\cM(\cH)$ and $k$ is nowhere-zero on $\Omega$.
\end{theorem}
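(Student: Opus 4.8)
The plan is to exploit the characterization of complete Pick spaces, due to Agler--McCarthy, as (up to a unitary rescaling of the kernel) the restrictions of the Drury--Arveson space $H^2_d$ to subsets of a ball. Once one is in $H^2_d$, the factorization becomes a statement about how functions in $H^2_d$ relate to the multiplier algebra $\cM(H^2_d)$, and the engine is the column-extreme / corona-type machinery developed for that space. Concretely, I would proceed as follows.

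First I would normalize: after replacing $K(s,t)$ by $K(s,t)/(u(s)\overline{u(t)})$ and each $f\in\cH$ by $f/u$ — which changes nothing about the conclusion, since $u$ and $1/u$ are then multipliers — we may assume the Pick kernel is \emph{normalized}, i.e.\ there is a point $t_0$ with $K(\cdot,t_0)\equiv1$ and the matrix $(1-1/K(s,t))$ is positive semidefinite. By the Agler--McCarthy theorem this puts $\cH$ isometrically, as a space of functions, inside the Drury--Arveson space $H^2_d$ (for a possibly infinite $d$) realized on a subset $\Omega$ of the unit ball of $\ell^2_d$, with multipliers of $\cH$ being exactly the restrictions of multipliers of $H^2_d$. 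So it suffices to prove the factorization in $H^2_d$ itself and then restrict.

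Next, the heart of the matter: given $f\in H^2_d$, produce a nowhere-zero multiplier $k$ with $fk\in\cM(H^2_d)$. The idea is to choose $k$ of the form $k=1-g$ where $g$ is a multiplier with $\|g\|_{\cM}<1$ (this automatically makes $k$ invertible in $\cM$, hence nowhere zero), arranged so that $fk$ is bounded as a multiplier. One builds $g$ by a Carleson-measure / Nevanlinna--Pick argument: the ``bad set'' where $|f|$ is large has to be damped, and the complete Pick property is precisely what lets one solve the relevant interpolation/ $\overline\partial$-type problem in the multiplier algebra rather than merely in $H^\infty$ of the ambient ball. An efficient route is the one in \cite{AHMR17}: exhibit $f$ as a limit of a rapidly convergent series $\sum f_n$ with $f_n\in\cM(\cH)$ and $\|f_n\|_{\cM}$ decaying geometrically along a cleverly chosen subsequence of radial (or dilation) truncations $f_{r_n}$, then take $k$ to be an infinite product $\prod(1-c_n(f-f_{r_n}))$ of multipliers close to $1$; the product converges in $\cM(\cH)$ to a nowhere-zero multiplier, and telescoping shows $fk\in\cM(\cH)$. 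Verifying that the truncations $f_r$ are multipliers with controlled norm, and that the defect $f-f_r$ is small \emph{in multiplier norm} (not just in $\cH$), is where the complete Pick property — via the $H^2_d$ model and its Carleson measures — does the real work.

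I expect the last step to be the main obstacle, and within it the genuinely hard point is the passage from smallness in the Hilbert-space norm to smallness in the \emph{multiplier} norm: a generic $f\in\cH$ need not be a multiplier at all, so one cannot simply cut off and estimate. This is exactly the gap that the complete Pick property closes, because in the $H^2_d$ realization multiplier norms are governed by Carleson measures and by the one-function Corona theorem for $\cM(H^2_d)$, both of which are available only there. The normalization step and the passage $\cH\hookrightarrow H^2_d$ are routine given Agler--McCarthy; the product-convergence bookkeeping is routine given the norm estimates; so essentially all the difficulty is concentrated in the multiplier-norm control of the truncations, and for that I would simply invoke the estimates of \cite{AHMR17} rather than reprove them.
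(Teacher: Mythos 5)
The survey does not actually prove Theorem~\ref{T:AHMR}: it is quoted from \cite[Theorem 1]{AHMR17} without proof, so your proposal has to stand on its own, and as written it has a genuine gap. Your opening reduction is fine: rescaling by $u$ to normalize the kernel and passing to the Drury--Arveson realization via Agler--McCarthy is the standard set-up (it amounts to writing $1-1/K(s,t)=\langle b(s),b(t)\rangle$). The problem is the core construction. The factor $k=\prod_n\bigl(1-c_n(f-f_{r_n})\bigr)$ only makes sense if each defect $f-f_{r_n}$ is a multiplier of small multiplier norm; but a generic $f\in\cH$ is not a multiplier at all, and no truncation scheme can make $f-f_r$ small in multiplier norm, since $\cM(\cH)$ is complete in the multiplier norm and $f$ would then itself be a multiplier (your phrase ``a rapidly convergent series $\sum f_n$ with $f_n\in\cM(\cH)$ and geometrically decaying multiplier norms'' already implies $f\in\cM(\cH)$, which is false in general). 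You correctly identify this as ``the genuinely hard point,'' but you supply no mechanism for it and instead propose to invoke the estimates of \cite{AHMR17} --- which is circular, because the statement you are proving \emph{is} Theorem~1 of that paper; Carleson-measure or corona-type facts about $\cM(H^2_d)$ control multipliers, not arbitrary elements of the space, so they do not bridge the gap either.

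For the record, the argument in \cite{AHMR17} is not a truncation argument but a positivity (``lurking isometry''/Leech-type) argument: if $\|f\|_\cH\le1$, then the kernel $K(s,t)-f(s)\overline{f(t)}$ is positive semidefinite; combining this with $1-1/K(s,t)=\langle b(s),b(t)\rangle$ and factoring the resulting positive kernel yields multipliers $\varphi,\psi$ such that the column $(\varphi,\psi)$ is a contractive multiplier, $\psi(t_0)=0$ at the normalization point, and $f\,(1-\psi)=\varphi$. A $2\times2$ positivity check against the point $t_0$, using $K(\cdot,t_0)\equiv1$, shows $|\psi|<1$ everywhere on $\Omega$, so $k:=1-\psi$ is a nowhere-zero multiplier and $f=\varphi/(1-\psi)$, as required. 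If you want to include a proof, reproducing that argument is both shorter and complete, whereas your truncation-and-infinite-product route would first require precisely the factorization you are trying to establish.
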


Examples of RKHS's with the complete Pick property include the Hardy space $H^2$
and the Dirichlet space $\cD$. On the other hand, the Bergman space $A^2:=L^2(\DD)\cap\hol(\DD)$ is an
RKHS without the complete Pick property, and in fact Theorem~\ref{T:AHMR} fails for $A^2$.
Indeed, the multiplier algebra of $A^2$ is $H^\infty$, 
but not every function $f\in A^2$ can be written as $h/k$ with $h,k\in H^\infty$, 
because $h/k$ has radial limits a.e.\ on~$\TT$, yet there exist  $f\in A^2$ without any radial limits at all 
(see  \cite{Ca64} for a very simple example).


\section{GKZ-theorems in Dirichlet spaces}\label{S:Dirichlet}

Given a positive superharmonic function $\omega$ on $\DD$, 
the \emph{weighted Dirichlet space} $\cD_\omega$
is the set of $f\in\hol(\DD)$ such that
\[
\cD_\omega(f):=\frac{1}{\pi}\int_\DD |f'(z)|^2\omega(z)\,dA(z)<\infty.
\]
The weight $\omega$ is automatically integrable, so $\cD_\omega$ contains all polynomials.
One can show that $\cD_\omega\subset H^2$, 
and that $\cD_\omega$ becomes a Hilbert space when endowed with
the norm $\|\cdot\|_{\cD_\omega}$ defined by
\[
\|f\|_{\cD_\omega}^2:=\|f\|_{H^2}^2+\cD_\omega(f).
\]

Taking $\omega\equiv1$, we obtain the classical Dirichlet space $\cD$.
Other interesting examples include the standard weighted Dirichlet spaces $\cD_\alpha$ for $0<\alpha<1$
(obtained by taking $\omega(z):=(1-|z|^2)^\alpha$),
and the harmonically weighted Dirichlet spaces introduced by Richter in \cite{Ri91} 
and further studied by Richter and Sundberg in \cite{RS91}.
The study of Dirichlet spaces with general superharmonic weights 
was initiated by Aleman in his habilitation thesis \cite{Al93},
where further details on this subject may be found.

The following result, obtained in \cite{MRR17}, is an analogue of the GKZ-theorem for Dirichlet spaces.

\begin{theorem}\label{T:Dirichlet}
Let $\omega$ be a positive superharmonic function on $\DD$.
Let  $\Lambda:\cD_\omega\to\CC$ be a linear functional  
such that $\Lambda(1)=1$ and  $\Lambda(g)\ne0$ for all nowhere-vanishing functions $g\in \cD_\omega$.
Then there exists $w\in \DD$ such that
$\Lambda(f)=f(w)$ for all $f\in\cD_\omega$.
\end{theorem}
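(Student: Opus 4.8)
The plan is to mimic the proof of Theorem~\ref{T:Hardy}, replacing the inner--outer factorization by the factorization theorem of Aleman, Hartz, McCarthy and Richter (Theorem~\ref{T:AHMR}). First I would observe that $\cD_\omega$ is a reproducing kernel Hilbert space on $\DD$: it is contained in $H^2$, hence point evaluations are bounded. One then needs to know that $\cD_\omega$ has the complete Pick property. For the classical Dirichlet space this is classical, and for general superharmonic weights this should be available in the references cited (Richter \cite{Ri91}, Richter--Sundberg \cite{RS91}, Aleman \cite{Al93}); I would quote it. Granting this, Theorem~\ref{T:AHMR} tells us every $f\in\cD_\omega$ factors as $f=h/k$ with $h,k\in\cM(\cD_\omega)$ and $k$ nowhere zero on $\DD$.

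Next I would set up an application of the module theorem, Theorem~\ref{T:module}, with $A:=\cM(\cD_\omega)$, $M:=\cD_\omega$, and $S$ the set of nowhere-vanishing functions in $\cD_\omega$. Property~(S1) follows from Theorem~\ref{T:AHMR}: writing $f=h/k$ with $k$ nowhere zero, the constant function $1$ and the nowhere-vanishing $k$ both lie in $S$ (once one checks $k\in\cD_\omega$, which follows since $k=k\cdot 1$ with $k$ a multiplier and $1\in\cD_\omega$), and $f=h\cdot(1/k)\cdot 1 = \dots$; more directly, $f = (1/k)\cdot h$ but $1/k$ need not be a multiplier, so instead one writes $f = h \cdot k^{-1}\cdot 1$... here I would be careful and instead argue that $S$ generates $M$ simply because $1\in S$ and $M = \cM(\cD_\omega)\cdot 1$ fails in general; the clean route is $f = g_1 - g_2$ with $g_1,g_2$ nowhere vanishing, or use that $h = f k \in \cD_\omega$ and $k\in S$, so $f = h\cdot(1/k)$ with $h\in\cD_\omega$ --- the correct statement to use is that $f$ lies in the $A$-module generated by $S$ because $f\cdot k\in\cD_\omega$ and we can perturb: since constants are in $S$, $f+c\in S$ for a suitable constant $c$ when $f$ is bounded, and in general one passes to the equivalence-class version of the module theorem described after Theorem~\ref{T:module}. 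Property~(S2): if $a\in\cM(\cD_\omega)$ is invertible in the multiplier algebra and $s\in S$, then $as$ is nowhere zero (since $a$ is nowhere zero, being invertible in the multipliers, which are bounded holomorphic functions with bounded-below modulus) and $as\in\cD_\omega$, so $as\in S$. Property~(S3): given $s_1,s_2\in S$, factor each as a quotient of multipliers with nowhere-zero denominator using Theorem~\ref{T:AHMR}, and clear denominators to produce $a_1,a_2$ that are nowhere-zero multipliers with $a_1s_1=a_2s_2$; one checks $a_jS\subset S$.

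Having invoked Theorem~\ref{T:module}, I obtain a character $\chi$ on $\cM(\cD_\omega)$ with $\Lambda(am)=\chi(a)\Lambda(m)$ for all $a\in\cM(\cD_\omega)$, $m\in\cD_\omega$. I then set $w:=\chi(u)$ where $u(z)=z$ (note $u$ is a multiplier of $\cD_\omega$ since multiplication by $z$ is bounded). For $\lambda\notin\overline{\DD}$, the function $u-\lambda 1$ is nowhere vanishing on $\DD$ and lies in $\cD_\omega$, so $\Lambda(u-\lambda 1)\ne 0$, whence $\chi(u-\lambda 1)\ne 0$ and $w\ne\lambda$; moreover for $|\lambda|=1$, $u-\lambda 1$ is also nowhere zero in $\DD$, so $w\in\DD$. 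Finally, for any $f\in\cD_\omega$ I write $f = f(w)1 + (u-w1)k$ where $k(z):=(f(z)-f(w))/(z-w)$; the key point is that $k\in\cD_\omega$, which I would verify from the definition of $\cD_\omega(\cdot)$ (differentiate the quotient; the singularity at $w$ is removable and the derivative estimate is local, so integrability against $\omega$ is preserved). Applying $\Lambda$ and using $\chi(u-w1)=\chi(u)-w=0$ gives $\Lambda(f)=f(w)$.

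The main obstacle I anticipate is verifying property~(S1) cleanly, together with confirming that $\cD_\omega$ has the complete Pick property for arbitrary positive superharmonic weights $\omega$ --- this is the substantive input, and if it is not available in full generality one must either restrict the class of weights or fall back on the equivalence-class refinement of Theorem~\ref{T:module} (the paragraph following its proof), showing that the conclusion $\Lambda(f)=f(w)$ still propagates from a single equivalence class to all of $\cD_\omega$. A secondary technical point is checking that $k\in\cD_\omega$ in the last step; this is a routine but not entirely trivial estimate using superharmonicity (hence local boundedness away from the boundary) of $\omega$.
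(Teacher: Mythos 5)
Your overall strategy (module theorem with $A=\cM(\cD_\omega)$, $M=\cD_\omega$, $S=$ nowhere-vanishing functions, then the $w=\chi(u)$ and difference-quotient endgame) is exactly the paper's, and your verifications of (S2), (S3) and the final step match the paper's proof. But there is a genuine gap at property (S1), and you essentially concede it: none of your attempted routes actually shows that the zero-free functions generate $\cD_\omega$ as an $\cM(\cD_\omega)$-module. The AHMR factorization $f=h/k$ does not do this, since $1/k$ need not be a multiplier (this factorization is the right tool for (S3), not (S1)); adding a constant to $f$ need not produce a zero-free function, and $f$ need not be bounded anyway; and retreating to the equivalence-class refinement of Theorem~\ref{T:module} only yields $\Lambda(am)=\chi_0(a)\Lambda(m)$ on the submodule $M_0$ generated by one equivalence class, with no mechanism (and no continuity of $\Lambda$ available) to propagate $\Lambda(f)=f(w)$ to all of $\cD_\omega$. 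The missing idea in the paper is different and specific: factor $f=hg$ with $h$ inner and $g$ outer, invoke Aleman's theorem \cite[Chapter~IV, Theorem~3.4]{Al93} that the outer part $g$ again lies in $\cD_\omega$, and then write $f=(h-2)g+2g$; both summands are nowhere vanishing (since $|h|\le 1<2$ and $g$ is outer) and both lie in $\cD_\omega$ (for $(h-2)g=f-2g$), which is precisely (S1).

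Two smaller points. The complete Pick property of $\cD_\omega$ for general positive superharmonic weights is indeed the substantive external input, but the correct reference is Shimorin \cite{Sh02}, not Richter, Richter--Sundberg or Aleman as you suggest. And your concern about the difference quotient $k(z)=(f(z)-f(w))/(z-w)$ belonging to $\cD_\omega$ is legitimate but routine (the paper simply carries over the Hardy-space argument); the real obstruction in this proof is (S1), and without the inner-outer plus Aleman step your argument does not go through.
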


\begin{proof}
We apply Theorem~\ref{T:module}, 
taking $M$ to be $\cD_\omega$ and $A=\cM(\cD_\omega)$,  the multiplier algebra of $\cD_\omega$.
Let $S$  be  the set of nowhere-vanishing functions in $\cD_\omega$.
We need to check the conditions (S1), (S2) and (S3) of Theorem~\ref{T:module}.

For condition~(S1), it suffices to show that every $f\in\cD_\omega$ can be written as 
$f=g_1+g_2$, where $g_1,g_2\in\cD_\omega$ and neither function $g_j$ has a zero in $\DD$. 
Given $f\in\cD_w$, factor it as $f=hg$, where  $h$ is inner and $g$ outer.
By \cite[Chapter~IV, Theorem~3.4]{Al93}, we have $g\in\cD_w$.
Then  $f=(h-2)g+2g$ is the sum of two nowhere-vanishing 
functions in $\cD_w$. Thus condition~(1) is verified.

Condition (S2) is obviously satisfied, 
since invertible elements of $\cM(\cD)$ must be everywhere non-zero on $\DD$.

For condition~(S3), let $g_1,g_2$ be nowhere-vanishing elements of $\cD_\omega$.
By a theorem of Shimorin \cite{Sh02}, 
the space $\cD_\omega$ is a RKHS with the complete Pick property.
Therefore, by Theorem~\ref{T:AHMR}, we can write  $g_j=h_j/k_j$, 
where $h_1,k_1,h_2,k_2$ are nowhere-vanishing elements of $\cM(\cD_\omega)$. 
Set $a_1:=h_2k_1$ and $a_2:=h_1k_2$. 
These $a_1,a_2$ are nowhere-vanishing elements of $\cM(\cD_\omega)$,
so $a_jS\cup a_2S\subset S$, and further $a_1g_1=a_2g_2$. 
Thus condition~(S3) is satisfied.

By Theorem~\ref{T:module}, 
there exists a character $\chi$ on $\cM(\cD_\omega)$ such that
\begin{equation}\label{E:Dlinfun}
\Lambda(hf)=\chi(h)\Lambda(f) \qquad(f\in \cD,~h\in \cM(\cD_\omega)).
\end{equation}
The rest of the proof is just the same as in the proof of Theorem~\ref{T:Hardy},
replacing $H^p$ by $\cD_\omega$ throughout.
\end{proof}

Just as before, we deduce a characterization of weighted composition operators,
this time on the spaces $\cD_\omega$. 

\begin{corollary}\label{C:Diriclet}
Let $\omega$ be a positive superharmonic function on $\DD$,
and let $T:\cD_\omega\to\hol(\DD)$ be a linear map 
such that $(Tg)(z)\ne0$ for all nowhere-vanishing functions $g\in\cD_\omega$ and all $z\in\DD$.
Then there exist holomorphic functions 
$\phi:\DD\to\DD$ and $\psi:\DD\to\CC\setminus\{0\}$ such that 
\[
Tf=\psi.(f\circ\phi)\qquad(f\in \cD_\omega).
\]
\end{corollary}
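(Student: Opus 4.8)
The plan is to mimic exactly the proof of Corollary~\ref{C:Hardy}, substituting $\cD_\omega$ for $H^p$ and invoking Theorem~\ref{T:Dirichlet} in place of Theorem~\ref{T:Hardy}. First I would set $\psi:=T1$, which is a holomorphic function on $\DD$; since the constant function $1$ is nowhere-vanishing in $\cD_\omega$, the hypothesis on $T$ forces $(T1)(z)\ne0$ for every $z\in\DD$, so $\psi$ is nowhere zero on $\DD$. Next I would define $\phi:=(Tu)/\psi$, where $u(z):=z$; this is a ratio of two holomorphic functions with nowhere-vanishing denominator, hence holomorphic on $\DD$.

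The core step is to fix an arbitrary point $z\in\DD$ and consider the map $\Lambda_z:\cD_\omega\to\CC$ given by $\Lambda_z(f):=(Tf)(z)/\psi(z)$. Linearity of $\Lambda_z$ follows from linearity of $T$ and evaluation at $z$. One checks $\Lambda_z(1)=(T1)(z)/\psi(z)=1$. Moreover, if $g\in\cD_\omega$ is nowhere-vanishing, then by hypothesis $(Tg)(z)\ne0$, so $\Lambda_z(g)\ne0$. Thus $\Lambda_z$ satisfies the hypotheses of Theorem~\ref{T:Dirichlet}, and we obtain a point $w=w(z)\in\DD$ with $\Lambda_z(f)=f(w)$ for all $f\in\cD_\omega$. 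Applying this with $f:=u$ gives $w=u(w)=\Lambda_z(u)=(Tu)(z)/\psi(z)=\phi(z)$, so in fact $w(z)=\phi(z)$. Since $w(z)\in\DD$ for every $z$, we conclude $\phi:\DD\to\DD$; and the identity $\Lambda_z(f)=f(\phi(z))$ rewrites as $(Tf)(z)=\psi(z)\,f(\phi(z))$ for all $f\in\cD_\omega$ and all $z\in\DD$, which is precisely $Tf=\psi.(f\circ\phi)$.

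The only points requiring a word of care, rather than a genuine obstacle, are the elementary facts that $\psi$ and $\phi$ are holomorphic (immediate once $\psi$ is known to be nowhere zero, since the range of $T$ lies in $\hol(\DD)$) and that $u\in\cD_\omega$ and $1\in\cD_\omega$ so that $\Lambda_z$ is defined on a space containing the test functions we use; both are guaranteed because $\cD_\omega$ contains all polynomials. There is no real difficulty here: the proof is a verbatim transcription of the Hardy-space argument, with ``outer function'' replaced everywhere by ``nowhere-vanishing function'' and Theorem~\ref{T:Hardy} replaced by Theorem~\ref{T:Dirichlet}. Accordingly I would simply write: \emph{The proof is identical to that of Corollary~\ref{C:Hardy}, with $H^p$ replaced by $\cD_\omega$, with ``outer'' replaced by ``nowhere-vanishing'', and with Theorem~\ref{T:Hardy} replaced by Theorem~\ref{T:Dirichlet}.}
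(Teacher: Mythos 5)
Your proof is correct and is exactly the argument the paper intends: the corollary is stated without proof precisely because it follows verbatim from the proof of Corollary~\ref{C:Hardy}, with Theorem~\ref{T:Dirichlet} replacing Theorem~\ref{T:Hardy} and ``nowhere-vanishing'' replacing ``outer''. Your verification of the small points (that $1$ and $u$ lie in $\cD_\omega$ since polynomials do, and that $\psi=T1$ is zero-free because $1$ is nowhere-vanishing) is all that is needed.
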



\section{GKZ-theorems in other function spaces}\label{S:functspaces}

The proofs of Theorems~\ref{T:Hardy} and \ref{T:Dirichlet} use special factorization properties of Hardy spaces
and Dirichlet spaces,
and do not easily generalize to other families of spaces. 
However, if we are willing to assume the continuity of the  linear maps involved, 
then these theorems
do  extend to a wide variety of other spaces, 
albeit with a  different proof.

In what follows, 
we shall consider a Banach space $X\subset\hol(\DD)$ with the following properties:
\begin{enumerate}[(X1)]
\item for each $w\in\DD$, the  map $f\mapsto f(w):X\to\CC$ is continuous;
\item $X$ contains the polynomials and they are dense in $X$;
\item $X$ is shift-invariant: $f\in X\Rightarrow zf\in X$.
\end{enumerate}
We write $\cM(X)$ for the \emph{multiplier algebra} of $X$, namely
\begin{align*}
\cM(X)&:=\{h\in\hol(\DD): hf\in X \text{~for all~} f\in X\},\\
\|h\|_{\cM(X)}&:=\sup\{\|hf\|_X:\|f\|_X\le1\}.
\end{align*}
Using property (X1) above, it is not hard to see that
$\cM(X)$ can be identified with a closed subalgebra of 
the algebra of all bounded linear operators on $X$, so it is a Banach algebra.
For each $w\in\DD$, 
the evaluation functional $h\mapsto h(w)$ is a character on $\cM(X)$,
so $|h(w)|\le\|h\|_{\cM(X)}$. 
It follows that $\cM(X)\subset H^\infty$.
As $X$ contains the constants,  we also have $\cM(X)\subset X$.

We also consider a subset $Y$ of $X$ with the following properties:
\begin{enumerate}[(Y1)]
\item if $g\in X$ and $0<\inf_\DD|g|\le\sup_\DD|g|<\infty$, then $g\in Y$;
\item if $g(z):=z-\lambda$ where $\lambda\in\TT$, then $g\in Y$.
\end{enumerate}
 
Examples of spaces $X$ satisfying (X1)--(X3) include:
\begin{itemize}
\item the Hardy spaces $H^p~(1\le p<\infty)$;
\item the Bergman spaces $A^p~(1\le p<\infty)$;
\item the holomorphic Besov spaces $B_p~(1\le p<\infty)$;
\item the superharmonically weighted Dirichlet spaces $\cD_\omega$;
\item the  Sobolev spaces $S^p:=\{f:f'\in H^p\} ~(1\le p<\infty)$;
\item the disk algebra $A(\DD)$;
\item the little Bloch space $\cB_0$;
\item the  space $\vmoa$ of functions of vanishing mean oscillation;
\item the de Branges--Rovnyak spaces $\cH(b)$, for non-extreme points $b$  in the unit ball of $H^\infty$.
\end{itemize}

Examples of sets $Y$ satisfying (Y1)--(Y2) include:
\begin{itemize}
\item the nowhere-zero functions in $X$;
\item the outer functions in $X$;
\item the cyclic functions, if $X=H^p,A^p, B_p,\cD_\omega,\cB_0$ or $\vmoa$.
\end{itemize}

For background on these various spaces, we refer to \cite{Al93, EKMR14, FM16, Zh07}.

The following result is an analogue of the GKZ-theorem in this context.
Note that, this time, we \emph{do} assume that $\Lambda$ is continuous.

\begin{theorem}\label{T:linfun}
Let $X\subset\hol(\DD)$ be a Banach space satisfying (X1)--(X3) above.
Let $Y\subset X$ be a set satisfying (Y1)--(Y2) above.
Let $\Lambda:X\to\CC$ be a continuous linear functional such that 
$\Lambda(1)=1$ and $\Lambda(g)\ne0$ for all $g\in Y$.
Then there exists $w\in\DD$ such that 
\[
\Lambda(f)=f(w) \qquad(f\in X).
\]
\end{theorem}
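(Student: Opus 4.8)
The plan is to use the continuity of $\Lambda$ to reduce the problem to a statement about the action of $\Lambda$ on the multiplier algebra $\cM(X)$, and then apply the classical GKZ-theorem (Theorem~\ref{T:GKZ}) to a suitable functional on $\cM(X)$. First I would fix a multiplier $h \in \cM(X)$ and define $\chi(h) := \Lambda(h)$; more precisely, I would want to show that the map $f \mapsto \Lambda(hf)$ on $X$ is related to $\Lambda(f)$ by a scalar. To set this up properly, consider for each $h \in \cM(X)$ the linear functional $\Lambda_h(f) := \Lambda(hf)$ on $X$; by continuity of $\Lambda$ and boundedness of multiplication by $h$, each $\Lambda_h$ is continuous. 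The key point is to produce a character $\chi$ on $\cM(X)$ with $\Lambda(hf) = \chi(h)\Lambda(f)$ for all $h \in \cM(X)$, $f \in X$, exactly as in the module theorem but obtained by a continuity argument rather than conditions (S1)--(S3).

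The main step is to apply Theorem~\ref{T:GKZ} to the functional $\chi(h) := \Lambda(h)$ on the Banach algebra $A := \cM(X)$, which makes sense because $\cM(X) \subset X$ (the constants lie in $X$), so $\Lambda$ restricts to a continuous linear functional on $\cM(X)$ with $\chi(1) = \Lambda(1) = 1$. If $h \in \cM(X)$ is invertible in $\cM(X)$, then $h$ and $1/h$ are both bounded below and above on $\DD$ (each being in $H^\infty$ with bounded inverse), so by property~(Y1) both $h$ and $1/h$ lie in $Y$, hence $\Lambda(h) \ne 0$. Thus $\chi$ is non-zero on invertibles, and Theorem~\ref{T:GKZ} shows $\chi$ is a character on $\cM(X)$. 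The next task is to upgrade this to the identity $\Lambda(hf) = \chi(h)\Lambda(f)$ on all of $X$: for fixed $f \in X$ with $\Lambda(f) \ne 0$, the functional $h \mapsto \Lambda(hf)/\Lambda(f)$ on $\cM(X)$ sends $1$ to $1$ and is non-zero on invertible multipliers (since $hf \in Y$ would need to be checked — here one may instead argue through approximating $f$ by functions in $Y$, or directly note that if $h$ is invertible then $hf$ and $f$ differ by an invertible multiplier so their $\Lambda$-values are simultaneously nonzero, using density of polynomials and (Y1)); by Theorem~\ref{T:GKZ} it is a character $\chi_f$, and a standard argument (as in the proof of Theorem~\ref{T:module}) shows $\chi_f = \chi$ independent of $f$, giving $\Lambda(hf) = \chi(h)\Lambda(f)$.

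Once the multiplicative identity $\Lambda(hf) = \chi(h)\Lambda(f)$ is in hand, the endgame mirrors the proof of Theorem~\ref{T:Hardy}. Set $w := \chi(u)$ where $u(z) = z$; since $u$ is a multiplier, $w$ is well-defined, and for $\lambda \in \TT$ the function $u - \lambda$ lies in $Y$ by property~(Y2), so $\Lambda(u - \lambda) \ne 0$, hence $\chi(u) \ne \lambda$; combined with $|\chi(u)| \le \|u\|_{\cM(X)} \le 1$ (since $\cM(X) \subset H^\infty$) and the fact that $\chi(u-\lambda)\ne 0$ also for $|\lambda|>1$ (as $u-\lambda$ is then invertible in $\cM(X)$), we conclude $w \in \DD$. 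Finally, for any $f \in X$, use density of polynomials: writing a polynomial $p$ as $p(z) = p(w) + (z-w)q(z)$ with $q$ a polynomial hence a multiplier, we get $\Lambda(p) = p(w)\Lambda(1) + \chi(u - w)\Lambda(q) = p(w)$; then continuity of $\Lambda$ and of evaluation at $w$ (property~(X1)) extends this to $\Lambda(f) = f(w)$ for all $f \in X$.

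The step I expect to be the main obstacle is establishing $\Lambda(hf) = \chi(h)\Lambda(f)$ for \emph{all} $f \in X$, not merely for $f \in Y$ or $f$ a multiplier: one must handle the case $\Lambda(f) = 0$ and confirm that the auxiliary functional $h \mapsto \Lambda(hf)$ behaves well on invertible multipliers. The cleanest route is probably to first prove the identity for $f$ ranging over a dense set (polynomials or functions in $Y$) where the non-vanishing hypotheses apply cleanly, and then invoke continuity of $\Lambda$ together with continuity of $h$-multiplication to pass to general $f$; this is where properties (X1)--(X3) and the continuity assumption on $\Lambda$ are all genuinely used, and it is the crux of why continuity cannot be dropped here in the way it could in Theorems~\ref{T:Hardy} and~\ref{T:Dirichlet}.
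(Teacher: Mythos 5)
Your proposal contains, embedded in it, essentially the paper's own argument: restrict $\Lambda$ to $\cM(X)$, use (Y1) to see that invertible multipliers lie in $Y$, apply Theorem~\ref{T:GKZ} to conclude that $\Lambda|_{\cM(X)}$ is a character, set $w:=\Lambda(u)$, show $w\in\DD$, deduce $\Lambda(p)=p(w)$ for polynomials, and finish by density of polynomials together with continuity of $\Lambda$ and of evaluation at $w$. The long middle portion, in which you try to establish $\Lambda(hf)=\chi(h)\Lambda(f)$ for \emph{all} $f\in X$, is unnecessary for this endgame: in your polynomial computation both $(u-w1)q$ and $q$ are themselves multipliers (polynomials lie in $\cM(X)$ by (X3)), so the character property of $\Lambda|_{\cM(X)}$ already gives $\Lambda((u-w1)q)=\Lambda(u-w1)\Lambda(q)=0$. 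Moreover, as sketched, that middle step is not a proof: the assertion that for invertible $h$ the values $\Lambda(hf)$ and $\Lambda(f)$ are ``simultaneously nonzero'' is precisely what would have to be proved, and (Y1) does not apply to $hf$ for a general $f\in X$. You correctly sensed in your last paragraph that this step is the weak point, but the right response is to delete it, not to repair it; the actual crux is just GKZ on $\cM(X)$ plus density and continuity, which is exactly how the paper argues.

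The one genuine gap in the part you do need is the verification that $w\ne\lambda$ when $|\lambda|>1$: neither $\|u\|_{\cM(X)}\le1$ nor invertibility of $u-\lambda1$ in $\cM(X)$ follows from (X1)--(X3). One always has $\|u\|_{\cM(X)}\ge\|u\|_{H^\infty}=1$, and the inequality can be strict: for instance $X:=\{f=\sum_n a_nz^n:\sum_n|a_n|2^n<\infty\}$ satisfies (X1)--(X3), yet $\|u\|_{\cM(X)}=2$, and $1/(u-\tfrac{3}{2}1)\notin X\supset\cM(X)$, so $u-\tfrac{3}{2}1$ is not invertible in $\cM(X)$ although $|\tfrac32|>1$. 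The fix is the one the paper uses: for $|\lambda|>1$ the function $u-\lambda1$ satisfies $0<|\lambda|-1\le|z-\lambda|\le|\lambda|+1$ on $\DD$, so it lies in $Y$ by (Y1), whence $\Lambda(u-\lambda1)=w-\lambda\ne0$. Combined with (Y2) for $|\lambda|=1$, this yields $w\in\DD$, and the remainder of your final paragraph is correct as written.
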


\begin{proof}
If $h$ is invertible in $\cM(X)$, 
then  $h$ and $1/h$ belong to $H^\infty\cap X$, 
so $h\in Y$, and $\Lambda(h)\ne0$.
By Theorem~\ref{T:GKZ}, $\Lambda$ is a character on $\cM(X)$. 
As $X$ is shift-invariant, we have $u\in\cM(X)$ (where $u(z):=z$).
Set $w:=\Lambda(u)$. 
Then $\Lambda(p)=p(w)$ for all polynomials $p$.
If $|\lambda|\ge1$ then $u-\lambda1\in Y$, 
so $\Lambda(u-\lambda1)\ne0$. 
Consequently $w\in\DD$.
By (X1),
the evaluation functional  $f\mapsto f(w)$ is continuous on $X$. 
As polynomials are dense in $X$, 
we conclude that $\Lambda(f)=f(w)$ for all $f\in X$.
\end{proof}

Just as before, we deduce a characterization of weighted composition operators.
We endow $\hol(\DD)$ with its usual Fr\'echet-space topology.

\begin{corollary}\label{C:linmap}
Let $X\subset\hol(\DD)$ be a Banach space satisfying (X1)--(X3) above.
Let $Y\subset X$ be a set satisfying (Y1)--(Y2) above.
Let $T:X\to \hol(\DD)$ be a continuous linear map such that 
$Tg(z)\ne0$ for all $g\in Y$ and all $z\in\DD$. 
Then there exist holomorphic functions 
$\phi:\DD\to\DD$ and $\psi:\DD\to\CC\setminus\{0\}$ such that
\[
Tf=\psi.(f\circ\phi) \qquad (f\in X).
\]
\end{corollary}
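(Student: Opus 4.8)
The plan is to mimic the proof of Corollary~\ref{C:Hardy}, using Theorem~\ref{T:linfun} in place of Theorem~\ref{T:Hardy}. First I would set $\psi:=T1$ and observe that $\psi$ is nowhere-zero on $\DD$: indeed $1\in Y$ by property (Y1) (since $\inf_\DD|1|=\sup_\DD|1|=1$), so by hypothesis $(T1)(z)\ne0$ for all $z\in\DD$. Since $T$ maps into $\hol(\DD)$, the function $\psi$ is holomorphic. Next I would define $\phi:=(Tu)/\psi$ where $u(z):=z$; this is holomorphic on $\DD$ because $Tu\in\hol(\DD)$ and $\psi$ is a nowhere-zero holomorphic function.

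The key step is to fix $z\in\DD$ and apply Theorem~\ref{T:linfun} to the functional $\Lambda_z:X\to\CC$ defined by $\Lambda_z(f):=(Tf)(z)/\psi(z)$. I need to check the three hypotheses of that theorem. Linearity is immediate from linearity of $T$. Continuity follows because $T:X\to\hol(\DD)$ is continuous and the evaluation $g\mapsto g(z)$ is continuous on $\hol(\DD)$ in its Fréchet topology, so $f\mapsto (Tf)(z)$ is a continuous linear functional on $X$, and dividing by the nonzero constant $\psi(z)$ preserves continuity. We have $\Lambda_z(1)=(T1)(z)/\psi(z)=\psi(z)/\psi(z)=1$. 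Finally, for $g\in Y$ the hypothesis on $T$ gives $(Tg)(z)\ne0$, hence $\Lambda_z(g)\ne0$. Theorem~\ref{T:linfun} then yields a point $w=w(z)\in\DD$ with $\Lambda_z(f)=f(w)$ for all $f\in X$; in particular, taking $f:=u$, we get $(Tu)(z)/\psi(z)=u(w)=w$, so $w=\phi(z)$. Therefore $\phi(z)\in\DD$ and $(Tf)(z)=\psi(z)f(\phi(z))$ for all $f\in X$.

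Since $z\in\DD$ was arbitrary, this shows $Tf=\psi\cdot(f\circ\phi)$ for all $f\in X$, with $\phi:\DD\to\DD$ and $\psi:\DD\to\CC\setminus\{0\}$ both holomorphic, as required. I do not anticipate a serious obstacle here: the argument is essentially a verification that the hypotheses of Theorem~\ref{T:linfun} transfer to $\Lambda_z$, with the only mild subtlety being the appeal to continuity of point evaluation in the Fréchet topology on $\hol(\DD)$ to conclude that $\Lambda_z$ is continuous on $X$.
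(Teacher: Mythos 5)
Your proof is correct and is essentially the argument the paper intends: the corollary is deduced ``just as before'' by running the proof of Corollary~\ref{C:Hardy} with Theorem~\ref{T:linfun} in place of Theorem~\ref{T:Hardy}, and your verification (including that $1\in Y$ via (Y1) and that $\Lambda_z$ is continuous because point evaluation is continuous in the Fr\'echet topology on $\hol(\DD)$) matches that template.
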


In particular, this result applies to the linear maps $T:X\to\hol(\DD)$ that preserve outer functions.
Curiously, the  linear $T:X\to\hol(\DD)$ that preserve \emph{inner} functions can also be characterized as weighted composition operators. This is a special case of the next result.

\begin{theorem}\label{T:inner}
Let $X\subset\hol(\DD)$ be a Banach space satisfying (X1)--(X3) above.
Assume in addition  that the function $u(z):=z$ satisfies $\limsup_{n\to\infty}\|u^n\|_X^{1/n}\le1$.
Let $T:X\to\hol(\DD)$ be a continuous linear map that maps finite Blaschke products to inner functions,
and suppose further that $\dim(T(X))>1$.
Then there exist inner functions $\phi,\psi$, with $\phi$ non-constant, such that
\[
Tf=\psi.(f\circ \phi) \quad(f\in X).
\]
\end{theorem}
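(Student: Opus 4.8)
The plan is to reduce Theorem~\ref{T:inner} to Corollary~\ref{C:linmap} by composing $T$ with point evaluations and exploiting the fact that a non-constant inner function takes values in $\DD$, not on $\TT$. First I would fix $z\in\DD$ and consider the linear functional $\Lambda_z:X\to\CC$ defined by $\Lambda_z(f):=(Tf)(z)$; this is continuous because $T$ is continuous into $\hol(\DD)$ and evaluation at $z$ is continuous on $\hol(\DD)$. The key observation is that if $B$ is a finite Blaschke product then $TB$ is inner, so $|(TB)(z)|\le1$; moreover, since a finite Blaschke product is invertible in no obvious sense, I instead look at how $\Lambda_z$ behaves on a well-chosen test set. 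The cleanest route: show that for each fixed $z$, the functional $\Lambda_z$ is a scalar multiple of a character on the multiplier algebra $\cM(X)$, arguing as in the proof of Theorem~\ref{T:linfun}. Indeed, if $h\in\cM(X)$ is invertible then $h$ is nowhere-zero and bounded with $1/h$ bounded, so $h$ is a scalar multiple of a finite Blaschke product only in trivial cases --- here one needs the extra hypothesis $\limsup\|u^n\|_X^{1/n}\le1$ to control the spectrum of the shift and conclude $\sigma_{\cM(X)}(u)\subset\overline\DD$, which forces the characters of $\cM(X)$ to be point evaluations at points of $\overline\DD$.

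The second step is to produce, for each $z\in\DD$, a point $\phi(z)\in\overline\DD$ with $(Tf)(z)=\psi(z)\,f(\phi(z))$ for all $f\in X$, where $\psi(z)$ is a normalizing constant. Setting $\psi:=T1$, one checks $\psi$ is not identically zero (using $\dim(T(X))>1$ together with the constancy argument), so after restricting to where $\psi\ne0$ and dividing, the functional $f\mapsto(Tf)(z)/\psi(z)$ sends $1$ to $1$ and, by the GKZ-machinery via Theorem~\ref{T:GKZ} applied to $\cM(X)$, is a character; composing with the shift $u$ gives $\phi(z):=(Tu)(z)/\psi(z)\in\overline\DD$. One then argues that $\phi$ and $\psi$ extend holomorphically across the zero set of $\psi$ exactly as in Corollary~\ref{C:linmap}. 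The point where inner-ness (rather than merely nonvanishing) enters is the identification of $\phi$ as \emph{inner}: since $TB$ is inner for every finite Blaschke product $B$, substituting $f=B$ yields $(TB)(z)=\psi(z)\,B(\phi(z))$, and letting $z\to\TT$ along suitable radii while using that $|TB|\to1$ a.e.\ and $|T1|=|\psi|\to1$ a.e.\ forces $|B(\phi(e^{i\theta}))|=1$ a.e.; running this over a rich enough family of $B$ (e.g.\ $B=u$ and $B=(u-a)/(1-\bar a u)$) pins $\phi$ to be inner and $\psi=T1$ to be inner.

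The remaining step is to rule out $\phi$ constant. If $\phi\equiv c$ for some $c\in\overline\DD$, then $Tf=\psi\cdot f(c)$ for all $f$, so $T(X)$ is spanned by the single function $\psi$, contradicting $\dim(T(X))>1$. Hence $\phi$ is non-constant, which also upgrades $\phi(\DD)\subset\DD$ by the maximum principle (an inner function that is non-constant cannot be unimodular on $\DD$), and then $f\circ\phi$ makes sense for all $f\in X\subset\hol(\DD)$ and lies in $\hol(\DD)$; the identity $Tf=\psi\cdot(f\circ\phi)$, already verified pointwise on $\DD$ off the zero set of $\psi$ and extended by continuity/analyticity, holds on all of $\DD$. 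The main obstacle I anticipate is the boundary passage that forces $\phi$ and $\psi$ to be \emph{inner} rather than merely bounded by $1$: one must justify that the a.e.\ radial limits of $TB$, being inner, combine with those of $\psi=T1$ to give $|\phi|=1$ a.e., which requires knowing that $\psi$ has nontangential limits of modulus $1$ a.e.\ --- this is where $T1$ being inner (as $1$ is a finite Blaschke product) is used --- and that $\phi=Tu/\psi$ inherits nontangential boundary behaviour despite being a quotient; handling the common zeros of numerator and denominator, and confirming the quotient is genuinely an inner function (no singular inner part sneaking in), is the delicate part and will likely occupy the bulk of the argument.
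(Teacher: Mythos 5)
The paper does not actually contain a proof of Theorem~\ref{T:inner}: it is explicitly omitted ("different in spirit from that of all the preceding results") with a reference to \cite{MR17}, precisely because the GKZ-machinery of \S\S2--6 does not apply here. Your proposal runs into exactly the obstruction that forces this. The hypothesis on $T$ is only that finite Blaschke products are sent to \emph{inner} functions, and inner functions may vanish at interior points of $\DD$; moreover the intended conclusion has $\psi$ and $f\circ\phi$ inner, hence possibly vanishing at the chosen point $z$. So for a fixed $z\in\DD$ the functional $\Lambda_z(f):=(Tf)(z)$ comes with \emph{no} nonvanishing information at all: you do not know $\Lambda_z(h)\ne0$ for invertible $h\in\cM(X)$ (invertible multipliers are not finite Blaschke products, so the hypothesis says nothing about $Th$), you do not know $\Lambda_z(g)\ne0$ on any set $Y$ satisfying (Y1)--(Y2), and you may even have $\psi(z)=(T1)(z)=0$, making the normalization $f\mapsto(Tf)(z)/\psi(z)$ meaningless. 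Hence the central step of your argument --- "by the GKZ-machinery via Theorem~\ref{T:GKZ} applied to $\cM(X)$, $\Lambda_z/\psi(z)$ is a character" --- is unsupported; neither Theorem~\ref{T:linfun} nor Corollary~\ref{C:linmap} can be invoked. (The side claim that the spectral condition on $u$ forces all characters of $\cM(X)$ to be point evaluations on $\overline\DD$ is also false in general: already for $X=H^2$ one has $\cM(X)=H^\infty$, whose character space is far larger.)

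A secondary problem is that your boundary-passage step is circular: the argument "letting $z\to\TT$ and using $|TB|\to1$ a.e.\ forces $|B(\phi(e^{i\theta}))|=1$" presupposes the identity $(TB)(z)=\psi(z)B(\phi(z))$ on all of $\DD$, which is exactly what the (failed) first step was supposed to produce. The actual proof in \cite{MR17} works directly with the action of $T$ on finite Blaschke products: this is where the hypotheses $\limsup_n\|u^n\|_X^{1/n}\le1$ and continuity of $T$ are really used (they allow one to expand M\"obius transforms $(u-a)/(1-\bar a u)$ as norm-convergent power series in $X$ and hence to control $T$ of them in terms of the inner functions $T(u^n)$), and where $\dim T(X)>1$ rules out the degenerate case. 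If you want to salvage a proof, you need an argument of this kind rather than a reduction to the GKZ results of the earlier sections.
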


We omit the proof, which is different in spirit from that of all the preceding results.
The details can be found in \cite{MR17}.


\section{Open problems}\label{S:openproblems}

The results in this short survey lead to open problems in other fields.
We give two examples.

In Theorem~\ref{T:linfun}, we assumed the continuity of the linear functional~$\Lambda$.
This naturally raises  a question about automatic continuity.

\begin{problem}
Let $X,Y$ be as in Theorem~\ref{T:linfun} and let $\Lambda:X\to\CC$ be a linear functional such that $\Lambda(g)\ne0$ for all $g\in Y$. Is $\Lambda$ automatically continuous?
\end{problem}

Thanks to Theorems~\ref{T:Hardy} and \ref{T:Dirichlet}, we know that the answer
is yes if $(X,Y)=(H^p,\text{outer functions})$ or $(X,Y)=(\cD_\omega,\text{non-zero functions})$. 
However, the proofs use special properties of these spaces.
Is there is an abstract automatic continuity argument that takes care of general $(X,Y)$?

The proof of Theorem~\ref{T:Dirichlet} used two factorization properties of $\cD_\omega$, namely,
every $f\in\cD_\omega$ can be written as 
\begin{itemize}
\item $f=h/k$, where $h,k\in\cM(\cD_\omega)$, and
\item $f=bg$, where  $b$ is bounded and $g\in\cD_\omega$ is zero-free.
\end{itemize}
The first of these is a consequence of a general theorem about RKHS's (Theorem~\ref{T:AHMR}).
However, the second appears to make use of special properties of Dirichlet spaces.
This prompts the following series of questions.

\begin{problem}
Let $\cH$ be a RKHS with the complete Pick property, and let $f\in\cH$.
\begin{enumerate}[(1)]
\item Can we write $f=bg$, where $b$ is bounded and $g\in\cH$ is zero-free?
\item Can we write $f=hg$, where $h\in\cM(\cH)$ and $g\in\cH$ is zero-free?
\item Can we write $f=hg$, where $h\in\cM(\cH)$ and $g\in\cH$ is cyclic?
\end{enumerate}
(To say that $g$ is \emph{cyclic} means that $\overline{\cM(\cH)g}=\cH$.)
\end{problem}

An affirmative answer to (2) or (3) would likely have applications in other fields.


\end{document}